\def\@tocline#1#2#3#4#5#6#7{\relax
  \ifnum #1>\c@tocdepth 
  \else
    \par \addpenalty\@secpenalty\addvspace{#2}%
    \begingroup \hyphenpenalty\@M
    \@ifempty{#4}{%
      \@tempdima\csname r@tocindent\number#1\endcsname\relax
    }{%
      \@tempdima#4\relax
    }%
    \parindent\z@ \leftskip#3\relax \advance\leftskip\@tempdima\relax
    \rightskip\@pnumwidth plus4em \parfillskip-\@pnumwidth
    #5\leavevmode\hskip-\@tempdima
      \ifcase #1
       \or\or \hskip 1em \or \hskip 2em \else \hskip 3em \fi%
      #6\nobreak\relax
    \dotfill\hbox to\@pnumwidth{\@tocpagenum{#7}}\par
    \nobreak
    \endgroup
  \fi}
\newtheorem{thm}{Theorem}[section]
\newtheorem{lemma}[thm]{Lemma}
\newtheorem{thmx}{Theorem}
\newtheorem{corx}[thmx]{Corollary}
\theoremstyle{definition}
\newtheorem{remark}[thm]{Remark}
\newcommand{\caln}{{\mathcal{N}}}
\newcommand{\calo}{{\mathcal{O}}}
\newcommand{\calu}{{\mathcal{U}}}
\newcommand{\cd}{\mathrm{cd}}
\newcommand{\hd}{\mathrm{hd}}
\newcommand{\CC}{\mathbb{C}}
\newcommand{\QQ}{\mathbb{Q}}
\newcommand{\FF}{\mathbb{F}}
\tikzstyle{blackNode}=[fill=black, draw=black, shape=circle]
\title[Rational homological dimension in positive characteristic]{A note on the rational homological dimension of lattices in positive characteristic}
\author{Sam Hughes}
\address{Mathematical Institute, Andrew Wiles Building, University of Oxford, Oxford OX2 6GG, UK}
\email{sam.hughes@maths.ox.ac.uk}
\date{\today}
\subjclass{20J05, 20J06}
\begin{document}

\maketitle

\begin{abstract}
We show via $\ell^2$-homology that the rational homological dimension of a lattice in a product of simple simply connected Chevalley groups over global function fields is equal to the rational cohomological dimension and to the dimension of the associated Bruhat--Tits building.
\end{abstract}

\section{Introduction}
Let $k$ be the function field of an irreducible projective smooth curve $C$ defined over a finite field $\FF_q$. Let $S$ be a finite non-empty set of (closed) points of $C$. Let $\calo_S$ be the ring of rational functions whose poles lie in $S$. For each $p\in S$ there is a discrete valuation $\nu_x$ of $k$ such that $\nu_p(f)$ is the order of vanishing of $f$ at $p$. The valuation ring $\calo_p$ is the ring of functions that do not have a pole at $p$, that is \[\calo_S=\bigcap_{p\not\in S}\calo_p.\]

Let $\bar{k}$ denote the algebraic closure of $k$. Let $\mathbf{G}$ be an affine group scheme defined over $\bar{k}$ such that $\mathbf{G}(\bar{k})$ is almost simple.  For each $p\in S$ there is a completion $k_p$ of $k$ and the group $\mathbf{G}(k_p)$ acts on the Bruhat--Tits building $X_p$.  Thus, we may embed $\mathbf{G}(\calo_S)$ diagonally into the product $\prod_{p\in S}\mathbf{G}(k_p)$ as an arithmetic lattice.

The \emph{rational cohomological dimension} of a group $\Gamma$ is defined to be \[\cd_\QQ(\Gamma)\coloneqq \sup\{n\colon H^n(\Gamma;M)\neq0,\ M\text{ a }\QQ\Gamma\text{-module} \},\]
the \emph{rational homological dimension} is defined completely analogously as
\[\hd_\QQ(\Gamma)\coloneqq \sup\{n\colon H_n(\Gamma;M)\neq0,\ M\text{ a }\QQ\Gamma\text{-module} \}.\]

In \cite{Gandini2012} it is shown that $\cd_\QQ(\mathbf{G}(\calo_S))=\prod_{p\in S}\dim X_p$.  In light of this Ian Leary asked the author what is $\hd_\QQ(\mathbf{G}(\calo_S))$? 

\begin{thmx}\label{thm.hd}
Let $\mathbf{G}$ be a simple simply connected Chevalley group.  Let $k$ and $\calo_S$ be as above, then \[\hd_\QQ(\mathbf{G}(\calo_S))=\cd_\QQ(\mathbf{G}(\calo_S))=\prod_{p\in S}\dim X_p.\]
\end{thmx}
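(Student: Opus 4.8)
The plan is to sandwich $\hd_\QQ(\mathbf{G}(\calo_S))$ between a cohomological upper bound and an $\ell^2$-homological lower bound. Write $\Gamma := \mathbf{G}(\calo_S)$ and let $X := \prod_{p\in S} X_p$ be the contractible product building on which $\Gamma$ acts; its dimension is $n := \dim X = \sum_{p\in S}\dim X_p$. The inequality $\hd_\QQ(\Gamma)\le \cd_\QQ(\Gamma)$ holds for every group, and by \cite{Gandini2012} the right-hand side equals $n$, so the entire content is the reverse inequality $\hd_\QQ(\Gamma)\ge n$. I would detect $n$ homologically through $\ell^2$-Betti numbers, using Lück's algebraic description $\betti_m(\Gamma)=\dim_{\mathcal{N}\Gamma}H_m(\Gamma;\mathcal{N}\Gamma)$, where $\mathcal{N}\Gamma$ is the group von Neumann algebra. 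If $\betti_m(\Gamma)\ne 0$ then $H_m(\Gamma;\mathcal{N}\Gamma)\ne 0$, and since group homology depends on the coefficients only as a $\ZZ\Gamma$-module, we may regard $\mathcal{N}\Gamma$ as a $\QQ\Gamma$-module (restricting scalars along $\QQ\Gamma\hookrightarrow\mathcal{N}\Gamma$); hence $\hd_\QQ(\Gamma)\ge m$. Thus it suffices to establish the non-vanishing of the top $\ell^2$-Betti number, $\betti_{n}(\Gamma)\ne 0$.

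To compute this I would pass to the ambient locally compact second countable group $G := \prod_{p\in S}\mathbf{G}(k_p)$, in which $\Gamma$ sits as a lattice. In the framework of $\ell^2$-Betti numbers for such groups, the proportionality principle for lattices gives
\[ \betti_m(\Gamma)=\Covol(\Gamma)\cdot\betti_m(G) \]
for all $m$, where $\Covol(\Gamma)\in(0,\infty)$ because $\Gamma$ has finite covolume. This must be invoked in its non-uniform form, as the relevant function field lattices, such as $\SL_2(\FF_q[t])$, are not cocompact. The Künneth formula for products then writes $\betti_n(G)$ as a sum over distributions $\sum_p m_p=n$ of the products $\prod_p\betti_{m_p}(\mathbf{G}(k_p))$. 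Since each $\mathbf{G}(k_p)$ acts properly and \emph{cocompactly} on the contractible building $X_p$, the latter is a cocompact model for $\underline{E}\mathbf{G}(k_p)$ of dimension $d_p:=\dim X_p$, so $\betti_m(\mathbf{G}(k_p))=0$ for $m>d_p$. As $n=\sum_p d_p$, every term of the Künneth sum vanishes except the balanced one, leaving
\[ \betti_{n}(G)=\prod_{p\in S}\betti_{d_p}\bigl(\mathbf{G}(k_p)\bigr). \]

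It therefore remains to show that each local factor has non-zero top $\ell^2$-Betti number, and this is where the Chevalley hypothesis enters: over $k_p$ the group $\mathbf{G}(k_p)$ is split simple simply connected of rank $d_p$, so its Steinberg representation is square-integrable, i.e.\ a discrete series representation with positive formal degree. Via the Plancherel decomposition of the reduced $\ell^2$-cohomology of the building, only discrete series representations contribute, and the Steinberg representation supplies a non-zero $\ell^2$-harmonic class in the top degree $d_p$; consequently $\betti_{d_p}(\mathbf{G}(k_p))>0$. Feeding the positivity of each factor into the displayed product yields $\betti_{n}(G)\ne 0$, hence $\betti_{n}(\Gamma)\ne 0$ by proportionality, and combined with the upper bound the theorem follows.

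The step I expect to be the main obstacle is precisely this representation-theoretic input: pinning down that the reduced $\ell^2$-cohomology of the Euclidean building of a split simple $p$-adic group is concentrated in, and genuinely non-zero at, the top degree equal to the rank. Everything else is assembly of established machinery---the proportionality principle and Künneth formula for $\ell^2$-Betti numbers of locally compact groups, and Lück's dimension function---but care is needed to invoke these in the generality of non-uniform lattices in totally disconnected groups rather than in the discrete or cocompact settings where they are most familiar. A secondary point requiring attention is the clean isolation of the balanced term in the Künneth sum, which rests on the cocompactness of the action of $\mathbf{G}(k_p)$ on $X_p$ to force vanishing of the factor $\ell^2$-Betti numbers above the building dimension.
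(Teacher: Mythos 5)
Your argument is correct in substance, but it assembles the machinery along a genuinely different route from the paper. The paper never computes anything for the ambient locally compact group $G=\prod_{p\in S}\mathbf{G}(k_p)$: it notes that $\Gamma=\mathbf{G}(\calo_S)$ and the product lattice $\Lambda=\prod_{p\in S}\mathbf{G}(\FF_q[t_p])$ are lattices in the same group $G$, hence measure equivalent; it cites \cite[Theorem~1.6]{PetersonSauerThom2018} for the fact that each \emph{discrete} group $\mathbf{G}(\FF_q[t_p])$ has exactly one non-vanishing $\ell^2$-Betti number, in degree $\dim X_p$; it applies the K\"unneth formula for discrete groups to $\Lambda$; and it transfers non-vanishing to $\Gamma$ via Gaboriau's theorem that measure equivalence preserves vanishing of $\ell^2$-Betti numbers. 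The conclusion $\hd_\QQ(\Gamma)\geq\dim X$ is then drawn exactly as in your proposal (the paper uses $H_*(\Gamma;\calu\Gamma)$ where you use $H_*(\Gamma;\caln\Gamma)$; both work). Your route replaces measure equivalence plus Gaboriau by the Kyed--Petersen--Vaes proportionality principle, and replaces the citation to Petersen--Sauer--Thom by its underlying representation-theoretic content: concentration and non-vanishing of the reduced $\ell^2$-cohomology of the building in top degree via square-integrability of the Steinberg representation, which is precisely what the paper's references \cite{Dymara2004,Dymara2006,DavisDymaraJanuskiewicz2007} supply. What your approach buys is actual proportionality of values and no auxiliary choice of comparison lattice; what the paper's buys is that K\"unneth, dimension theory, and the homological transfer are only ever invoked for discrete groups, where they are unambiguous in the literature. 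Two points in your write-up need care. First, the K\"unneth formula for $\ell^2$-Betti numbers of \emph{locally compact} groups is markedly less documented than the discrete version (you flag this yourself); you can sidestep it entirely by applying proportionality factor-by-factor to obtain lattices $\Lambda_p\leq\mathbf{G}(k_p)$ with $\betti_{d_p}(\Lambda_p)>0$ and $\betti_m(\Lambda_p)=0$ for $m\neq d_p$, then running the discrete K\"unneth formula on $\prod_p\Lambda_p$ and proportionality once more --- at which point your proof essentially becomes the paper's. Second, your $n=\sum_{p\in S}\dim X_p$ silently (and correctly) reads the statement's $\prod_{p\in S}\dim X_p$ as $\dim\prod_{p\in S}X_p$; the paper makes the same identification.
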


More generally, we obtain the following.

\begin{corx}\label{cor.hd}
Let $\Gamma$ be a lattice in a product of simple simply connected Chevalley groups over global function fields with associated Bruhat--Tits building $X$, then $\hd_\QQ(\Gamma)=\cd_\QQ(\Gamma)=\dim X$.
\end{corx}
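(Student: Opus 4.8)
The inequality $\hd_\QQ(\Gamma)\le\cd_\QQ(\Gamma)$ holds for every group, and both are at most $\dim X$: indeed $X=\prod_i X_i$ is a contractible $\CAT(0)$ complex of dimension $\sum_i\dim X_i=\dim X$ on which $\Gamma$ acts properly, and the cell stabilisers are finite, hence of order invertible in $\QQ$; the polysimplicial chain complex $C_*(X;\QQ)$ is therefore a projective $\QQ\Gamma$-resolution of $\QQ$ of length $\dim X$, bounding both the projective and the flat dimension of $\QQ$ over $\QQ\Gamma$. Since \cite{Gandini2012} gives $\cd_\QQ(\Gamma)=\dim X$, the corollary reduces to the lower bound $\hd_\QQ(\Gamma)\ge\dim X$, and the task is to exhibit a single $\QQ\Gamma$-module with nonzero homology in degree $\dim X$.

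The $\ell^2$-homology enters through the following formal implication. By L\"uck's identification of $\ell^2$-Betti numbers with von Neumann dimensions of ordinary homology, $\betti_n(\Gamma)=\dim_{\caln(\Gamma)}H_n(\Gamma;\caln(\Gamma))$, where $\caln(\Gamma)$ is the group von Neumann algebra of the countable discrete group $\Gamma$ and the right-hand side is the algebraic group homology. If $\betti_n(\Gamma)\neq0$ then this homology group is nonzero as an abelian group; viewing $\caln(\Gamma)$ as a $\QQ\Gamma$-module through the inclusions $\QQ\Gamma\subset\CC\Gamma\subset\caln(\Gamma)$, the same group witnesses $\hd_\QQ(\Gamma)\ge n$. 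Thus it suffices to prove the single nonvanishing statement $\betti_{\dim X}(\Gamma)\neq0$.

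To compute this top $\ell^2$-Betti number I would pass to the ambient locally compact group. Realising $\Gamma$ as a lattice in $G=\prod_i G_i$, where each $G_i$ acts on its Euclidean building $X_i$ of dimension $d_i=\dim X_i=\rank G_i$, I would combine the lattice formula $\betti_n(\Gamma)=\Covol(\Gamma)\cdot\betti_n(G)$ for $\ell^2$-Betti numbers of unimodular lcsc groups with the Künneth formula, in which the top degree $\sum_i d_i$ receives a contribution only from the term $\prod_i\betti_{d_i}(G_i)$ (the higher $L^2$-Betti numbers of each $G_i$ vanish since $X_i$ is $d_i$-dimensional), so that $\betti_{\dim X}(G)=\prod_i\betti_{d_i}(G_i)$. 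The remaining input is that each factor is nonzero, $\betti_{d_i}(G_i)>0$. This is exactly the statement that the reduced $L^2$-cohomology of the Bruhat--Tits building $X_i$ is nonzero in its top degree $d_i$, where it is carried by the Steinberg representation of $G_i$; since the Steinberg representation of a simple group over a local field is square-integrable, it contributes positively to the top-degree $L^2$-Betti number. Multiplying through, $\betti_{\dim X}(\Gamma)=\Covol(\Gamma)\prod_i\betti_{d_i}(G_i)>0$, which completes the lower bound.

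The heart of the difficulty is concentrated in the last paragraph rather than in the reduction. One must genuinely know that the Steinberg representation realises the top reduced $L^2$-cohomology of a Euclidean building and is square-integrable---not merely tempered---so that the associated von Neumann dimension is strictly positive, and one must have the covolume and Künneth formulas for $\ell^2$-Betti numbers available for totally disconnected groups with possibly non-uniform lattices. The non-cocompactness is the crux: because $\mathbf{G}$ is isotropic the quotient $\Gamma\backslash X$ is noncompact, so the naive cellular $\ell^2$-chain complex on $\Gamma\backslash X$ is not finite and a direct computation is unavailable; it is precisely the detour through $G$, where the homogeneous structure makes the Steinberg representation visible, that bypasses this. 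By contrast, the reduction to a top-degree nonvanishing and the passage from $\ell^2$-Betti numbers to $\hd_\QQ$ are purely formal, and the argument applies verbatim to any lattice in such a product, giving \Cref{cor.hd}.
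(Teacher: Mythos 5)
Your proposal is correct, but it reaches the crucial lower bound $\hd_\QQ(\Gamma)\ge\dim X$ by a genuinely different mechanism than the paper. The paper never considers $\ell^2$-Betti numbers of the ambient locally compact group: it chooses a convenient comparison lattice $\Lambda=\prod_i\Lambda_i$ in the same product of Chevalley groups, gets non-vanishing of $\betti_{\dim X_i}(\Lambda_i)$ for each discrete factor from \cite{PetersonSauerThom2018} (see also \cite{Dymara2004,Dymara2006,DavisDymaraJanuskiewicz2007}), applies the K\"unneth formula to the product $\Lambda$, and then transfers the non-vanishing from $\Lambda$ to $\Gamma$ by Gaboriau's measure-equivalence theorem \cite{Gaboriau2002}, using that any two lattices in the same locally compact group are measure equivalent. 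You instead stay at the level of $G=\prod_i G_i$ itself and transfer down by the covolume-scaling formula $\betti_n(\Gamma)=\Covol(\Gamma)\,\betti_n(G)$ of Kyed--Petersen--Vaes, together with a K\"unneth formula for locally compact groups and positivity of the top $L^2$-Betti number of each factor via square-integrability of the Steinberg representation. Both routes rest on the same deep input --- the top-degree reduced $L^2$-cohomology of a thick affine building is non-zero and is carried by the Steinberg representation --- but your transfer step requires the more recent theory of $\ell^2$-Betti numbers for locally compact groups and their possibly non-uniform lattices, whereas the paper needs only the discrete theory plus Gaboriau's theorem; in exchange, your argument yields the actual value of $\betti_{\dim X}(\Gamma)$, proportional to the covolume, rather than mere non-vanishing, and it avoids exhibiting any particular comparison lattice. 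Your treatment of the upper bound (a projective $\QQ\Gamma$-resolution of length $\dim X$ coming from the proper action with finite stabilisers of order invertible in $\QQ$) and your formal passage from $\betti_n(\Gamma)>0$ to $\hd_\QQ(\Gamma)\ge n$ agree with the paper, the latter being its Lemma~2.1 with $\caln\Gamma$ in place of $\calu\Gamma$ --- both coefficient modules work, since homology with coefficients in a $\QQ\Gamma$-module is the same computed over $\ZZ\Gamma$ or $\QQ\Gamma$. (You also correctly write $\dim X=\sum_i\dim X_i$; the products appearing in the paper's statements are typos for sums.)
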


The author expects these results are well-known, however, they do not appear in the literature so we take the opportunity to record them here.

\subsection*{Acknowledgements}
The author would like to thank his PhD supervisor Professor Ian Leary for his guidance, support, and suggesting of the question.  This note contains material from the author's PhD thesis \cite{HughesThesis} and was originally part of \cite{Hughes2021a}, but was split off into a number of companion papers \cite{Hughes2021b,Hughes2022} (see also \cite{HughesValiunas2022}) at the request of the referee.  This work was supported by the Engineering and Physical Sciences Research Council grant number 2127970.  The author would like to thank the referee for a number of helpful comments.

\section{\texorpdfstring{$\ell^2$}{l2}--homology and measure equivalence}\label{sec.lat.L2}

Let $\Gamma$ be a group. Both $\Gamma$ and the complex group algebra $\CC \Gamma$ act by left multiplication on the Hilbert space $\ell^2\Gamma$ of square-summable sequences. The \emph{group von Neumann algebra} $\caln \Gamma$ is the ring of $\Gamma$-equivariant bounded operators on $\ell^2G$. The non-zero divisors of $\caln G$ form an Ore set and the Ore localization of $\caln \Gamma$ can be identified with the \emph{ring of affiliated operators} $\calu \Gamma$. 

There are inclusions $\QQ\Gamma\subseteq \caln  \Gamma\subseteq\ell^2\Gamma\subseteq\calu \Gamma$ and it is also known that $\calu \Gamma$ is a self-injective ring which is flat over $\caln \Gamma$. For more details concerning these constructions we refer the reader to \cite{LuckBook} and especially to Theorem 8.22 of Section~8.2.3 therein.  The \emph{von Neumann dimension} and the basic properties we need can be found in \cite[Section~8.3]{LuckBook}.

The $\ell^2$-Betti numbers of a group $\Gamma$, denoted $b_i^{(2)}(\Gamma)$, are then defined to be  the von-Neumann dimensions of the homology groups $H_i(\Gamma;\calu\Gamma)$.  The following lemma is a triviality.

\begin{lemma}\label{lemma.UG.DimensionBound}
Let $\Gamma$ be a discrete group and suppose that $b^{(2)}_i(\Gamma)>0$, then the homology group $H_i(\Gamma;\calu\Gamma)$ is non-trivial.
\end{lemma}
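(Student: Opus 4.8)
The plan is simply to unwind the definition of the $\ell^2$-Betti number and invoke the most basic property of the von Neumann dimension, namely that it vanishes on the trivial module. I would argue by contraposition.

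Concretely, suppose instead that $H_i(\Gamma;\calu\Gamma)=0$. By the definition recalled just above, the $i$-th $\ell^2$-Betti number $b_i^{(2)}(\Gamma)$ is precisely the von Neumann dimension of the $\caln\Gamma$-module $H_i(\Gamma;\calu\Gamma)$. Since the von Neumann dimension of the zero module is $0$ (see \cite[Section~8.3]{LuckBook}), we would obtain $b_i^{(2)}(\Gamma)=0$, contradicting the standing hypothesis $b_i^{(2)}(\Gamma)>0$. Hence $H_i(\Gamma;\calu\Gamma)$ must be non-trivial, as claimed.

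There is essentially no obstacle here, which is why the statement is labelled a triviality: the only thing to check is that the normalisation of the dimension function is the standard one with $\dim_{\caln\Gamma}(0)=0$, and that the homology group appearing in the definition of $b_i^{(2)}(\Gamma)$ is literally the one in the statement. Both are immediate from the conventions set up above, so the proof is a one-line appeal to the vanishing of the dimension on the zero module.
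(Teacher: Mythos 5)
Your proof is correct and matches the paper's intent exactly: the paper offers no written proof at all, dismissing the lemma as "a triviality," and your one-line argument — that vanishing of $H_i(\Gamma;\calu\Gamma)$ would force $b_i^{(2)}(\Gamma)=\dim_{\caln\Gamma}H_i(\Gamma;\calu\Gamma)=0$ by the normalisation of the von Neumann dimension — is precisely the unwinding of the definition the author has in mind.
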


Two countable groups $\Gamma$ and $\Lambda$ are said to be \emph{measure equivalent} if there exist commuting, measure-preserving, free actions of $\Gamma$ and $\Lambda$ on some infinite Lebesgue measure space $(\Omega,m)$, such that the action of each of the groups $\Gamma$ and $\Lambda$ admits a finite measure fundamental domain.  The key examples of measure equivalent groups are lattices in the same locally-compact group \cite{Gromov1993}.  The relevance of this for us is the following deep theorem of Gaboriau.

\begin{thm}[Gaboriau's Theorem \cite{Gaboriau2002}]
Suppose a discrete group $\Gamma$ is measure equivalent to a discrete group $\Lambda$, then $b_p(\Gamma)=0$ if and only if $b_p(\Lambda)=0$.
\end{thm}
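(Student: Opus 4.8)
The statement is the vanishing half of Gaboriau's proportionality principle, and I would in fact prove the sharper assertion that there is a constant $c>0$, depending only on the coupling, with $b_p^{(2)}(\Gamma)=c\cdot b_p^{(2)}(\Lambda)$ for every $p$; the stated equivalence of vanishing is then immediate. The guiding idea is that measure equivalence of groups is too rigid to manipulate directly, so I would translate everything into the language of measured orbit equivalence relations, where measure equivalence becomes \emph{stable orbit equivalence} and a flexible dimension theory is available.

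First I would attach to each free measure-preserving action of $\Gamma$ on a probability space $(X,\mu)$ its orbit equivalence relation $\calr$ together with the Feldman--Moore groupoid von Neumann algebra $\caln(\calr)\cong L^\infty(X,\mu)\rtimes\Gamma$, a finite algebra with normalized trace, and its ring of affiliated operators $\calu(\calr)$. Using Lück's von Neumann dimension over this algebra I would define the $\ell^2$-Betti numbers $\beta_p^{(2)}(\calr)$ as the von Neumann dimensions of the homology groups $H_p(\calr;\calu(\calr))$ computed from a standard simplicial resolution associated to $\calr$. The crucial comparison step is to show that for the orbit relation of a free action one has $\beta_p^{(2)}(\calr)=b_p^{(2)}(\Gamma)$. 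This I would obtain by identifying the relation complex with the complex computing $H_p(\Gamma;\calu\Gamma)$ after induction along $\caln\Gamma\hookrightarrow\caln(\calr)$, and invoking the fact that this base change is dimension-preserving precisely because $X$ carries a probability measure, so that the trace stays normalized.

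Next I would establish the scaling behaviour under restriction: for a positive-measure subset $A\subseteq X$ the restricted relation $\calr|_A$ has von Neumann algebra $p_A\caln(\calr)p_A$, on which the trace is rescaled by $\mu(A)^{-1}$, and a compression argument for the dimension function then yields $\beta_p^{(2)}(\calr|_A)=\mu(A)^{-1}\beta_p^{(2)}(\calr)$. Finally I would invoke Furman's dictionary, that $\Gamma$ being measure equivalent to $\Lambda$ is the same as the existence of free actions and positive-measure subsets $A\subseteq X$, $B\subseteq Y$ with an isomorphism of measured relations $\calr_\Gamma|_A\cong\calr_\Lambda|_B$. Combining the three computations along this isomorphism gives
\[
\frac{b_p^{(2)}(\Gamma)}{\mu(A)}=\beta_p^{(2)}(\calr_\Gamma|_A)=\beta_p^{(2)}(\calr_\Lambda|_B)=\frac{b_p^{(2)}(\Lambda)}{\nu(B)},
\]
so $b_p^{(2)}(\Gamma)=c\,b_p^{(2)}(\Lambda)$ with $c=\mu(A)/\nu(B)>0$, which proves the theorem.

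The main obstacle is the comparison step $\beta_p^{(2)}(\calr)=b_p^{(2)}(\Gamma)$: it requires the full apparatus of dimension theory over a type $\mathrm{II}_1$ algebra, a careful construction of a free resolution over the groupoid ring, and the verification that induction from $\caln\Gamma$ to $\caln(\calr)$ neither creates nor destroys von Neumann dimension. Everything else — the scaling formula and the passage through stable orbit equivalence — is comparatively formal once this identification and the attendant flatness of the affiliated-operator rings are in hand.
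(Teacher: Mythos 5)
This statement is not proved in the paper at all: it is imported as a black box, cited to Gaboriau's 2002 paper, and used only through its vanishing consequence in the proofs of Theorem A and Corollary B. So there is no in-paper argument to compare against; what you have written is a reconstruction of Gaboriau's own proof, and as an outline it is faithful to the known argument. Your route — $\ell^2$-Betti numbers of measured equivalence relations via Lück's dimension function over $\caln(\calr)\cong L^\infty(X,\mu)\rtimes\Gamma$, the identification $\beta_p^{(2)}(\calr)=b_p^{(2)}(\Gamma)$ for free p.m.p.\ actions, the compression formula $\beta_p^{(2)}(\calr|_A)=\mu(A)^{-1}\beta_p^{(2)}(\calr)$, and Furman's translation of measure equivalence into stable orbit equivalence — is precisely the modern (Sauer-style homological) form of Gaboriau's proportionality principle $b_p^{(2)}(\Gamma)=c\cdot b_p^{(2)}(\Lambda)$, of which the stated theorem is the immediate corollary. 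You also correctly locate the hard core: the comparison step is where all the analytic work lives, and it is not formal.

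Two points you gloss over deserve flagging. First, passing from a measure equivalence coupling $(\Omega,m)$ to stable orbit equivalence of free actions on probability spaces is not automatic: the induced action of $\Gamma$ on $\Lambda\backslash\Omega$ need not be free even when both actions on $\Omega$ are, and one must either invoke Furman's argument carefully or force freeness (e.g.\ by enlarging the coupling with a Bernoulli factor). Second, "dimension-preserving base change" along $\caln\Gamma\hookrightarrow\caln(\calr)$ holds because the inclusion is trace-preserving and one has Lück/Sauer's dimension theory for arbitrary modules over finite von Neumann algebras, together with a local criterion or flatness-type argument for the crossed product; stating that it works "because the measure is normalized" names the right hypothesis but hides the actual content. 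Neither gap is a flaw in the strategy — both are resolved in the literature — but a complete write-up would need them, and they are a large part of why the source paper treats the theorem as deep rather than reproving it.
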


\section{Proofs}

\begin{proof}[Proof of Theorem~\ref{thm.hd}]
We first note that the group $\Gamma:=\mathbf{G}(\calo_S)$ is measure equivalent to the product $\Lambda:=\prod_{p\in S}\mathbf{G}(\FF_q[t_p])$ for some suitably chosen $t_p\in \calo_p$.  By \cite[Theorem~1.6]{PetersonSauerThom2018} (see also \cite{Dymara2004,Dymara2006,DavisDymaraJanuskiewicz2007}) the group $\mathbf{G}(\FF_q[t_p])$ has one non-vanishing $\ell^2$-Betti number in dimension $\dim(X_p)$.  Hence, by the K\"unneth formula $\Lambda$ has one non-vanishing $\ell^2$-Betti number in dimension $d=\prod_{p\in S}\dim X_p$  Thus, by Gaboriau's theorem, the group $\Gamma$ has exactly one non-vanishing $\ell^2$-Betti number in dimension $d$.  It follows from Lemma~\ref{lemma.UG.DimensionBound} that $\hd_\QQ(\Gamma)\geq d$.  The reverse inequality follows from the fact that $\Gamma$ acts properly on the $d$-dimensional space $\prod_{p\in S}\dim X_p$.
\end{proof}

\begin{proof}[Proof of Corollary~\ref{cor.hd}]
The proof of the corollary is entirely analogous.  First, we split $\mathbf{G}$ into a product of simple groups $\prod_{i=1}^n\mathbf{G}_i$ corresponding to the decomposition of the Bruhat--Tits building $X=\prod_{i=1}^nX_i$.  Let $\Lambda_i$ be a lattice in $\mathbf{G}_i$ and let $\Lambda=\prod_{i=1}^n\Lambda_i$.  Each $\Lambda_i$ has a non-vanishing $\ell^2$-Betti Number in dimension $\dim X_i$.  In particular, $\Lambda$ has a non-vanishing $\ell^2$-Betti Number in dimension $\dim X=\prod_{i=1}^n\dim X_i$.  By Gaboriau's Theorem $\Gamma$ also has non-vanishing $\ell^2$-Betti Number in dimension $\dim X$.  It follows from Lemma~\ref{lemma.UG.DimensionBound} that $\hd_\QQ(\Gamma)\geq d$.  The reverse inequality follows from the fact that $\Gamma$ acts properly on the $d$-dimensional space $\prod_{p\in S}\dim X_p$.
\end{proof}

\begin{remark}
A similar argument can be applied to lattices in products of simple simply-connected algebraic groups over locally compact $p$-adic fields.  One obtains the analogous result for such a lattice $\Gamma$ that $\cd_\QQ (\Gamma)=\hd_\QQ (\Gamma)=\dim X$, where $X$ is the associated Bruhat--Tits building.
\end{remark}

\AtNextBibliography{\small}
\printbibliography

\end{document}